%
%
%
\documentclass[12pt]{amsart}
\usepackage{amsmath,amssymb,amsthm}
\usepackage{latexsym,ulem}      
\usepackage{colordvi,graphicx}
\usepackage{bm} 
\usepackage{url}
\headheight=8pt
\topmargin=3pt
\textheight=611pt
\textwidth=456pt
\oddsidemargin=6pt
\evensidemargin=6pt

\numberwithin{equation}{section}

\newtheorem{theorem}{Theorem}[section]

\newtheorem{lemma}[theorem]{Lemma}
\newtheorem{corollary}[theorem]{Corollary}
\theoremstyle{remark}
\newtheorem{example}[theorem]{Example}
\newtheorem{remark}[theorem]{Remark}
\newtheorem{definition}[theorem]{Definition}
\newcounter{FNC}[page]
\def\fauxfootnote#1{{\addtocounter{FNC}{2}\Magenta{$^\fnsymbol{FNC}$}%
     \let\thefootnote\relax\footnotetext{\Magenta{$^\fnsymbol{FNC}$#1}}}}
\newcommand{\C}{{\mathbb C}}

\newcommand{\calX}{{\mathcal X}}
\newcommand{\calY}{{\mathcal Y}}

\newcommand{\Fl}{{\mathbb F}\ell}
\newcommand{\adot}{{a_\bullet}}
\newcommand{\apdot}{{a_\bullet^\perp}}
\newcommand{\bdot}{{b_\bullet}}
\newcommand{\Edot}{E_{\bullet}}
\newcommand{\Fdot}{F_{\bullet}}
\newcommand{\Fpdot}{F_{\bullet}^\perp}

\DeclareMathOperator{\Gr}{Gr}

\newcommand{\bw}{{\bf w}}

\newcommand{\sspan}[1]{\langle\langle #1\rangle\rangle}


%

%
%
\newcommand{\defcolor}[1]{\Blue{#1}}
\newcommand{\demph}[1]{\defcolor{{\sl #1}}}

\title[A lifted square formulation for certifiable Schubert calculus]{A lifted square
  formulation for\\certifiable Schubert calculus}   
\author{Nickolas Hein, Frank Sottile}
\address{Nickolas Hein \\
         Department of Mathematics\\
         University of Nebraska at Kearney\\
         Kearney\\
         Nebraska \ 68849\\
         USA}
\email{heinnj@unk.edu}
\urladdr{http://www.unk.edu/academics/math/faculty/About\_Nickolas\_Hein/}
\address{Frank Sottile \\
         Department of Mathematics\\
         Texas A\&M University\\
         College Station\\
         Texas \ 77843\\
         USA}
\email{sottile@math.tamu.edu}
\urladdr{http://www.math.tamu.edu/~sottile/}
\thanks{Research of Hein and Sottile supported in part by NSF grant DMS-0915211.}
\subjclass[2010]{14N15, 14Q20.}
%
%
\keywords{Schubert calculus, square systems, certification.}
\begin{document}
\begin{abstract}
 Formulating a Schubert problem as the solutions to a system of equations in
 either Pl\"ucker space or in the local coordinates of a Schubert cell
 usually involves more equations than variables.
 Using reduction to the diagonal, we previously gave a primal-dual formulation for Schubert problems that  
 involved the same number of variables as equations (a square formulation).
 Here, we give a different square formulation by lifting incidence conditions which typically involves fewer
 equations and variables. 
 Our motivation is certification of numerical computation using Smale's $\alpha$-theory.
\end{abstract}
\maketitle

A $m\times n$ matrix $M$ with $m\geq n$ is rank-deficient if and only if all of its
$n\times n$ minors vanish.
This occurs if and only if there is a nonzero vector $v\in\C^n$ with $Mv=0$.
There are $\binom{m}{n}$ minors and each is a polynomial of degree $n$ in the $mn$ entries of $M$.
In local coordinates for $v$, the second formulation gives $m$ bilinear equations in $mn{+}n{-}1$
variables, and the map $(M,v)\mapsto M$ is a bijection over an open dense set of matrices of rank $n{-}1$.
The set of rank-deficient matrices has dimension $(m{+}1)(n{-}1)$, which shows that the second formulation is
a complete intersection, while the first is not if $m>n$.
The principle at work here is that adding extra information may simplify the description of a degeneracy locus.

Schubert varieties in the flag manifold are universal degeneracy loci~\cite{Fu92}.
We explain how to add information to a Schubert variety to simplify its description in local coordinates.
This formulates membership in a Schubert variety as a complete intersection of bilinear equations
and  formulates any Schubert problem as a square system of bilinear equations.
This lifted formulation is both different from and typically significantly more efficient than the primal-dual
square formulation of~\cite{HaHS}, as we demonstrate in Section~\ref{S:compare}. 

Our motivation comes from numerical algebraic geometry~\cite{SW05}, which uses 
numerical analysis to represent and manipulate algebraic varieties on a computer.
It does this by solving systems of polynomial equations and following solutions along curves.
For numerical stability, low degree polynomials are preferable to high
degree polynomials.
More essential is that Smale's $\alpha$-theory~\cite{Smale86} enables the certification of computed solutions to
square systems of polynomial equations~\cite{alphaC}, and therefore efficient square formulations of systems of
polynomial equations are desirable. 
Furthermore, the estimates used in implementations of $\alpha$-theory simplify for bilinear systems, as
explained in~\cite[Rem.~2.11]{HaHS}.
Interestingly, formulations as square systems of bilinear equations may also aid Gr\"obner basis
computations.
Faug\`ere, et al.~\cite{FSS} gave improved complexity bounds for
zero-dimensional bilinear systems.

The Schubert calculus is a well-understood, rich family of enumerative problems which has served as a
laboratory to study new phenomena in enumerative geometry~\cite{MS}.
Problems in Schubert calculus lead to highly-structured systems of polynomials that are challenging to study.
Traditional formulations of most problems in Schubert calculus are not complete intersections, and those which are
complete intersections have far fewer solutions than predicted by the BKK bound~\cite{BKK}---this is
demonstrated in Table~2 of~\cite{So00b}.

Square formulations of Schubert problems also 
enable the certified computation of monodromy, using either the algorithm of Beltr{\'a}n and
Leykin~\cite{BL12,BL13} or the Newton homotopies of Hauenstein and Liddell~\cite{HL14}.
This will in turn enable the certified computation of Galois groups~\cite{LS,MS}.
Because general degeneracy loci are pullbacks of Schubert varieties, these square formulations may
lead to formulations of  more general problems involving degeneracy loci as square systems of polynomials.

In Section~\ref{S:traditional} we explain the traditional formulation of Schubert problems using Stiefel
coordinates and determinantal equations expressing rank conditions.
In Section~\ref{S:lifted} we give our new lifted square formulation for Schubert varieties and Schubert
problems, illustrating with some examples.
In Section~\ref{S:compare} we compare the efficiency of the lifted formulation with the
primal-dual formulation of~\cite{HaHS}, demonstrating that the lifted formulation typically
involves fewer equations and variables, and through three examples that computations using
it consume fewer resouorces.

\section{Determinantal formulation of Schubert problems}\label{S:traditional}

The Schubert calculus involves all questions of determining the (flags of) linear subspaces of a vector space
that have specified positions with respect to other (fixed, but general) flags of linear subspaces.
We briefly sketch the Schubert calculus, Stiefel coordinates for Schubert varieties, and
traditional determinantal formulations of Schubert problems.
We work over the complex numbers for convenience and motivation from numerical algebraic geometry.
Our formulations and main result are valid over any field, if we replace claims of transversality by properness
(expected dimension) when the field has positive characteristic.
This is because Kleiman's result showing transversality of the intersection of general translates becomes
properness in positive characteristic~\cite{Kl74}.
For the Grassmannian, we retain transversality as Vakil~\cite{Va06b} proved that
general translates of Schubert varieties in a Grassmannian intersect transversally in any characteristic.

%
\subsection{Schubert problems}
Fix an integer $n$ and a sequence $\adot\colon a_1<\dotsb<a_s<n$ of positive integers.
A \demph{flag of type $\adot$} is a sequence of linear subspaces
\[
   \Edot\ \colon\ E_{a_1}\ \subset\ E_{a_2}\ \subset\ \dotsb\ \subset\ E_{a_s}\ \subset\ \C^n\,,
\]
where $\dim E_{a_j}=a_j$.
A flag is \demph{complete} if $\adot=\{1,2,\dotsc,n{-}1\}$.
Given a flag $\Edot$ of type $\adot$, there is a list $(e_1,\dotsc,e_{a_s})$ 
of independent vectors such that $E_{a_j}$ is the linear span of 
$\{e_1,\dotsc,e_{a_j}\}$ for each $1\leq j\leq s$.
In this case, write $\Edot=\defcolor{\sspan{e_1,\dotsc,e_{a_s}}_{\adot}}$.
The set of all flags of type $\adot$ is an algebraic manifold \defcolor{$\Fl(\adot;n)$} of dimension 
 \begin{equation}\label{Eq:dimFlagVariety}
   \defcolor{\dim(\adot)}\ :=\ \sum_{j=1}^s (n-a_j)(a_j-a_{j-1})
   \ =\ n\cdot a_s \ -\ \sum_{i=1}^s a_j(a_j-a_{j-1})\,,
 \end{equation}
where $a_0:=0$.
When $s=1$, the flag manifold $\Fl(\adot;n)$ is the \demph{Grassmannian} of $a_1$-planes in $\C^n$,
\defcolor{$\Gr(a_1;n)$}, which has dimension $a_1(n{-}a_1)$.  

The \demph{position} of a flag $\Edot$ of type $\adot$ with respect to a complete flag $\Fdot$ is the 
$n\times s$ array of nonnegative integers $\dim(F_i\cap E_{a_j})$ for $i=1,\dotsc,n$ and $j=1,\dotsc,s$.
These positions are encoded by permutations $w\in S_n$ with descents in $\adot$.
For such a permutation $w$, $w(i)>w(i{+}1)$ implies that $i=a_j$, for some $j$.
Write \defcolor{$W^\adot$} for this set of permutations.
Given $w\in W^\adot$ and a complete flag $\Fdot$, we have the 
\demph{Schubert cell}, 
 \begin{equation}\label{Eq:SchubertCell}
   \defcolor{X^\circ_w \Fdot}\ :=\ \{\Edot\in\Fl(\adot)\ |\ \dim(F_i\cap E_{a_j})=
    \#\{ k\leq a_j \,\mid\, w(k)\leq i\} \}\,.
 \end{equation}
The \demph{Schubert variety} \defcolor{$X_w\Fdot$} is the closure of $X^\circ_w \Fdot$ and is
obtained by replacing the dimension equality in~\eqref{Eq:SchubertCell} with an inequality $\geq$.
This has dimension $\defcolor{\ell(w)}:=\#\{k<j\mid w(k)>w(j)\}$, the number of inversions of $w$, and thus
codimension $\defcolor{|w|}:=\dim(\adot)-\ell(w)$.  

A \demph{Schubert problem} is a list $\defcolor{\bw}:=(w_1,\dotsc,w_r)$ of elements $w_i\in W^{\adot}$ for
$i=1,\dotsc,r$ satisfying $|w_1|+\dotsb+|w_r|=\dim(\adot)$. 
Given a Schubert problem $\bw$, Kleiman showed~\cite{Kl74} there is an open dense subset of the product of flag
manifolds consisting of $r$-tuples of flags $(\Fdot^1,\dotsc,\Fdot^r)$ such that the intersection
 \[
   X_{w_1}\Fdot^1\;\cap\;
   X_{w_2}\Fdot^2\;\cap\;\dotsb\;\cap\;
   X_{w_r}\Fdot^r
 \]
is transverse.
Kleiman's Theorem implies that the points of intersection lie in the corresponding Schubert
cells---we lose nothing (for general flags) if we restrict to Schubert cells, and the same reasoning allows
us to restrict to any dense open subset of the Schubert varieties.
The number of points in the intersection is independent of the choice of general flags and this number may be
determined by algorithms in the Schubert calculus.

\subsection{Determinantal formulation of a Schubert variety}

Suppose that $\calX$ is a set of $n\times a_s$ matrices $x$ whose column vectors $e_1(x),\dotsc,e_{a_s}(x)$ are
independent.
The association 
\[
   \calX\ni x\ \longmapsto\ \sspan{e_1(x),\dotsc,e_{a_s}(x)}_\adot\ =:\ \defcolor{\Edot(x)}
\]
defines a map $\calX\to\Fl(\adot;n)$.
We call $\calX$ \demph{Stiefel coordinates} for the closure of the image of this map.
We have $\defcolor{E_{a_j}(x)}:=\mbox{span}\{e_1(x),\dotsc,e_{a_j}(x)\}$, and we also write $E_{a_j}(x)$ for the 
$n\times a_j$ matrix whose columns are $e_1(x),\dotsc,e_{a_j}(x)$.
Whether we intend the subspace or the matrix will be clear from context.

Suppose that a set $\calX$ of $n\times a_s$ matrices forms Stiefel coordinates for some subset
$X\subset\Fl(\adot;n)$.   
Let $\Fdot$ be a flag with a basis $f_1,\dotsc,f_n$ that forms the columns of a $n\times n$ matrix.
Write \defcolor{$F_k$} both for the $k$-dimensional subspace of the flag $\Fdot$ and for the $n\times k$
matrix with columns $f_1,\dotsc,f_k$.

For $w\in W^{\adot}$, set $\defcolor{r_{i,j}(w)}:=\#\{ k\leq a_j \,\mid\, w(k)\leq i\}$.
Then $\Edot\in X_w\Fdot$ if and only if 
\[
   \dim F_i\cap E_{a_j}\ \geq\ r_{i,j}(w)\qquad i=1,\dotsc,n\quad j=1,\dotsc,s\,.
\]
Then the condition on $x\in\calX$ that $\Edot(x)\in X_w\Fdot$ is
 \[  
   \mbox{rank}\, \bigl(\, F_i\,\mid\, E_{a_j}(x)\,\bigr)\ \leq\ 
    i+a_j-r_{i,j}(w)\qquad i=1,\dotsc,n\quad j=1,\dotsc,s\,.
  \] 
This is given by the vanishing of minors of $(F_i\mid E_{a_j})$ of size $i+a_j-r_{i,j}(w)+1$.
These are polynomials in the entries of $x\in\calX$.
Not all such minors are needed.
Even if redundant minors are eliminated, the number that remains will in general exceed $|w|$.
This is discussed for Grassmannians in Section 1.3 of~\cite{HaHS}, where it is shown that 
after removing redundancy, $|w|=0,\,1$ or $a_1=1,\,n{-}1$ are the only cases for which this number of
minors equals $|w|$. 
In Section~\ref{S:comparison} we present a typical example minimally requiring 17 minors, but where $|w|=4$.

\subsection{Stiefel coordinates for Schubert varieties}\label{SS:Steifel}

A Schubert cell $X^\circ_w\Fdot$ has a description in terms of bases.
For a flag $\Fdot$, set $F_0:=\{0\}$.

\begin{lemma}\label{L:SCell_Span}
 Let $w\in W^\adot$ and $\Fdot$ be a complete flag.
 Then a flag $\Edot$  of type $\adot$ lies in $X^\circ_w\Fdot$ if and only if there exist vectors 
 $(e_1,\dotsc,e_{a_s})$ with  $e_k\in F_{w(k)}\smallsetminus F_{w(k)-1}$ for $k=1,\dotsc,a_s$ and 
 $\Edot=\sspan{e_1,\dotsc,e_{a_s}}_{\adot}$.
\end{lemma}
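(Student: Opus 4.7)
The plan is to prove the two directions separately: sufficiency (given the basis, verify the cell conditions) reduces to a triangular dimension count in coordinates adapted to $\Fdot$, while necessity (given the flag, build the basis) is an inductive construction, choosing one vector $e_k$ at a time.

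For sufficiency, I would fix a basis $f_1,\dotsc,f_n$ of $\Fdot$ with $F_i=\mbox{span}(f_1,\dotsc,f_i)$ and expand each $e_k$ in this basis. The hypothesis $e_k \in F_{w(k)} \smallsetminus F_{w(k)-1}$ translates to: the coefficient of $f_m$ in $e_k$ vanishes for $m>w(k)$ and is nonzero for $m=w(k)$. Since $w$ is a permutation, the leading positions $w(1),\dotsc,w(a_s)$ are distinct, so $e_1,\dotsc,e_{a_s}$ are linearly independent and $\sspan{e_1,\dotsc,e_{a_s}}_\adot$ is a well-defined flag of type $\adot$. To verify $\dim(F_i \cap E_{a_j}) = r_{i,j}(w)$, I would characterize vectors $v = \sum_{k\leq a_j}\lambda_k e_k$ lying in $F_i$: unwinding the equations $v_m=0$ for $m>i$ in decreasing order of $m$ gives a triangular system that forces $\lambda_k=0$ whenever $w(k)>i$, while leaving $\lambda_k$ free whenever $w(k)\leq i$. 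The count of free parameters is exactly $r_{i,j}(w)$.

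For necessity, I would construct $e_1,\dotsc,e_{a_s}$ inductively on $k$. Letting $j(k)$ be the smallest $j$ with $k\leq a_j$, I choose $e_k$ arbitrarily in $(E_{a_{j(k)}}\cap F_{w(k)}) \smallsetminus F_{w(k)-1}$. Nonemptiness follows from the cell condition together with the observation that $w$ is increasing on the block $(a_{j(k)-1},a_{j(k)}]$, giving $\dim(E_{a_{j(k)}}\cap F_{w(k)}) - \dim(E_{a_{j(k)}}\cap F_{w(k)-1}) = 1$. I would carry the inductive hypothesis that the previously constructed $e_{k'}$ are linearly independent, each $e_{k'}$ lies in $E_{a_{j(k')}}$, and for each $j$ with $a_j\leq k-1$ the vectors $e_1,\dotsc,e_{a_j}$ form a basis of $E_{a_j}$. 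The main verification is that $e_k$ is independent of $e_1,\dotsc,e_{k-1}$: rerunning the triangular argument of sufficiency identifies $\mbox{span}(e_1,\dotsc,e_{k-1})\cap F_{w(k)}$ with the span of those $e_{k'}$ having $k'<k$ and $w(k')<w(k)$, all of which lie in $E_{a_{j(k)}}\cap F_{w(k)-1}$. Counting shows these match the dimension of that intersection and so span it; thus $\mbox{span}(e_1,\dotsc,e_{k-1})\cap F_{w(k)} \subseteq F_{w(k)-1}$, and $e_k\notin F_{w(k)-1}$ gives independence. At the end, each set $e_1,\dotsc,e_{a_j}$ has the right size of linearly independent vectors inside $E_{a_j}$ and so spans it.

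The main obstacle is the bookkeeping in the inductive step: making sure that the previously chosen $e_{k'}$ account for exactly the right dimension in $E_{a_{j(k)}}\cap F_{w(k)-1}$. This hinges on two facts about $w\in W^\adot$. First, for $k'<k$ one has $j(k')\leq j(k)$, so automatically $e_{k'}\in E_{a_{j(k)}}$. Second, $w$ is increasing on each block $(a_{j-1},a_j]$, so for $k' \in (k,a_{j(k)}]$ necessarily $w(k')>w(k)$; this prevents any $e_{k'}$ with $k'>k$ and $w(k')<w(k)$ from existing in the block and so prevents over-counting. Once these are secured, both triangular-system arguments are routine echelon-form manipulations.
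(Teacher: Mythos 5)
Your proof is correct and follows essentially the same route as the paper: the sufficiency direction rests on the distinctness of the pivot positions $w(1),\dotsc,w(a_s)$ giving the explicit description $F_i\cap E_{a_j}=\langle e_k \mid k\le a_j,\ w(k)\le i\rangle$, and necessity rests on choosing each $e_k$ inside $E_{a_j}\cap F_{w(k)}$ but outside $F_{w(k)-1}$, which is possible because $\dim(E_{a_j}\cap F_{w(k)})=1+\dim(E_{a_j}\cap F_{w(k)-1})$. You carry a heavier inductive ledger than needed (the dimension-matching and spanning count inside $E_{a_{j(k)}}\cap F_{w(k)-1}$ is superfluous once one notes that distinct pivots already force linear independence), but nothing is wrong and the approach matches the paper's.
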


\begin{proof}
 Let $e_1,\dotsc,e_{a_s}$ be such a collection of vectors with $e_k\in F_{w(k)}\smallsetminus F_{w(k)-1}$.
 As $w$ is a permutation, these vectors are linearly independent.
 If $\Edot=\sspan{e_1,\dotsc,e_{a_s}}_{\adot}$, then 
\[
   F_i\cap E_{a_j}\ =\ 
    \langle e_k\,\mid\, k\leq a_j\mbox{ and } w(k)\leq i\rangle\,,
\]
 and so $\Edot\in X^\circ_w\Fdot$.
 Conversely, if $\Edot\in X^\circ_w\Fdot$, observe that if $a_{j-1}< k\leq a_j$, then 
 the condition that $\Edot$ lies in the Schubert cell and $w\in W^{\adot}$  implies that
\[
   \dim F_{w(k)}\cap E_{a_j}\ =\ 1\ +\ \dim F_{w(k)-1}\cap E_{a_j}\,.
\]
 For each such $j$ and $k$, let $e_k$ be a nonzero vector that, together with $F_{w(k)-1}\cap E_{a_j}$, 
 spans $F_{w(k)}\cap E_{a_j}$.
 Then $e_k\in F_{w(k)}\smallsetminus F_{w(k)-1}$, and $\Edot=\sspan{e_1,\dotsc,e_{a_s}}_{\adot}$.
\end{proof}

Lemma~\ref{L:SCell_Span} leads to the usual Stiefel coordinates for Schubert cells~\cite[Ch.~10]{Fulton}.
Given $w\in W^{\adot}$, let $\calX_w$ be the collection of $n\times a_s$ matrices $(x_{i,j})$
such that 
 \begin{eqnarray*}
  x_{w(k),k} &=& 1\mbox{ for }k=1,\dotsc,a_s\\
  x_{i,j}&=&0 \mbox{ if } i>w(j)\mbox{ or }i=w(k)\mbox{ for some }k<j\,,
 \end{eqnarray*}
and $x_{i,j}$ is otherwise unconstrained.
For example, here are typical matrices in $\calX_w$ for $w=5724613$ with $\adot=(2,5)$
and $w=3652471$ with $\adot=(2,3,5,6)$,
\[
  \left(\begin{array}{ccccc}
        x_{1,1}&x_{1,2}&x_{1,3}&x_{1,4}&x_{1,5}\\
        x_{2,1}&x_{2,2}&   1   &   0   &   0   \\
        x_{3,1}&x_{3,2}&   0   &x_{3,4}&x_{3,5}\\
        x_{4,1}&x_{4,2}&   0   &   1   &   0   \\
           1   &   0   &   0   &   0   &   0   \\
           0   &x_{6,2}&   0   &   0   &   1   \\
           0   &   1   &   0   &   0   &   0   \end{array}\right)
   \qquad
  \left(\begin{array}{cccccc}
        x_{1,1}&x_{1,2}&x_{1,3}&x_{1,4}&x_{1,5}&x_{1,6}\\
        x_{2,1}&x_{2,2}&x_{2,3}&   1   &   0  &   0   \\
           1   &   0   &   0   &   0   &   0   &   0   \\
           0   &x_{4,2}&x_{4,3}&   0   &   1   &   0   \\
           0   &x_{5,2}&   1   &   0   &   0   &   0   \\
           0   &   1   &   0   &   0   &   0   &   0   \\
           0   &   0   &   0   &   0   &   0   &   1   \end{array}\right)
\]
It is an exercise to show that if $x\in\calX_w$, then $\Edot(x)\in X^\circ_w\Fdot$, where $\Fdot$
is the standard coordinate flag in $\C^n$. 
Suppose that $\Edot\in X^\circ_w\Fdot$ and $\Edot=\sspan{e_1,\dotsc,e_{a_s}}_{\adot}$ as in Lemma~\ref{L:SCell_Span}. 
Let $y$ be a $n\times a_s$ matrix with column vectors $e_1,\dotsc,e_{a_s}$.
If we reduce each column of $y$ modulo those to its left, we obtain a matrix in $\calX_w$.
We summarize this discussion.

\begin{lemma}
 For any $w\in W^\adot$, the set $\calX_w$ gives Stiefel coordinates for the Schubert variety
 $X_w\Fdot$ where $\Fdot$ is the standard coordinate flag.
 The map $\calX_w\to X_w\Fdot$ defined by $x\mapsto \Edot(x)$ is a bijection between $\calX_w$ and the
 Schubert cell $X_w^\circ\Fdot$.
\end{lemma}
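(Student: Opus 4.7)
The plan is to first establish the bijection $\calX_w\to X^\circ_w\Fdot$ claimed in the second sentence of the lemma; the first sentence then follows because $X_w\Fdot$ is by definition the closure of $X^\circ_w\Fdot$. For the easy direction, the pattern defining $\calX_w$ forces each column $e_k(x)$ of $x\in\calX_w$ to have its $w(k)$-th entry equal to $1$ and zero entries in rows greater than $w(k)$, so $e_k(x)\in F_{w(k)}\smallsetminus F_{w(k)-1}$. Since the pivot rows $w(1),\dotsc,w(a_s)$ are distinct, the columns are linearly independent, and Lemma~\ref{L:SCell_Span} places $\Edot(x)$ in $X^\circ_w\Fdot$.

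For surjectivity, given $\Edot\in X^\circ_w\Fdot$, I would apply Lemma~\ref{L:SCell_Span} to obtain vectors $e_1,\dotsc,e_{a_s}$ with $e_k\in F_{w(k)}\smallsetminus F_{w(k)-1}$ and $\Edot=\sspan{e_1,\dotsc,e_{a_s}}_\adot$. Let $y$ be the matrix with these columns. Processing $j=1,\dotsc,a_s$ in order, rescale column $j$ so that its $w(j)$-th entry is $1$ (possible since $e_j\notin F_{w(j)-1}$) and then subtract multiples of each earlier column $k<j$ to zero out the entries of column $j$ at the rows $w(k)$. Each such operation adds to column $j$ a combination of earlier columns, so every nested span $E_{a_{j'}}(y)$ is preserved. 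The main thing to verify is that no forbidden entry is created in column $j$: its entries in rows $>w(j)$ start zero, so whenever $w(k)>w(j)$ the $w(k)$-th entry of column $j$ is already zero and the sweep is trivial, while when $w(k)<w(j)$ the sweep only alters rows $\le w(k)<w(j)$. The result is $x\in\calX_w$ with $\Edot(x)=\Edot$.

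For injectivity, I would consider the projection $\pi_j\colon E_{a_j}\to\C^{a_j}$ onto the coordinates indexed by $w(1),\dotsc,w(a_j)$. Using any basis from Lemma~\ref{L:SCell_Span}, a triangularity argument (after reordering by $w$-value, the relevant matrix is upper triangular with nonzero diagonal) shows that $\pi_j$ is an isomorphism. The descent restriction $w\in W^\adot$ then enters in a key way: $w$ is strictly increasing on each block $(a_{j-1},a_j]$, so for $k<l$ in a common block one has $w(l)>w(k)$. Combined with the defining equations of $\calX_w$ (vanishing at earlier pivot rows and in rows above $w(k)$), this forces $\pi_j(e_k(x))$ to equal the standard basis vector $\epsilon_k$ for every $k\le a_j$, uniquely determining $e_k(x)$. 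Finally, the density of $X^\circ_w\Fdot$ in $X_w\Fdot$ upgrades the bijection to the Stiefel coordinates assertion. The principal technical hurdle, and what really makes everything go, is the block-monotonicity of $w$, which both keeps the column reduction inside $\calX_w$ and pins down the unique preimage.
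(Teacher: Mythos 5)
Your overall route is the same one the paper gestures at in the discussion preceding the lemma: use Lemma~\ref{L:SCell_Span} to see that the map lands in the Schubert cell, and obtain surjectivity by column-reducing a matrix of vectors produced by that lemma; the paper states no formal proof and in particular never argues injectivity, which you attempt to supply.

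Your first two paragraphs are fine, modulo one implicit point in the surjectivity sweep: when you clear row $w(k)$ of column $j$ by subtracting a multiple of the (already reduced) column $k$, that subtraction can re-populate some row $w(k'')$ with $k<k''<j$ and $w(k'')<w(k)$, since $x_{w(k''),k}$ is a free entry of $\calX_w$. You need either to perform the sweeps in decreasing order of $w(k)$, or to observe that the relevant linear system is unitriangular after sorting by $w$-value; as written, ``the sweep only alters rows $\le w(k)$'' does not by itself show the already-cleared pivot rows stay cleared. (Also, ``rows above $w(k)$'' should be ``rows below $w(k)$'', i.e.\ row index $>w(k)$, to match the defining condition $x_{i,k}=0$ for $i>w(k)$.)

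The genuine error is in the injectivity paragraph. The claim that $\pi_j(e_k(x))=\epsilon_k$ \emph{for every $k\le a_j$} is false. For $k$ in an earlier block, say $k\le a_{j-1}$, and $k<k'\le a_j$ with $w(k')<w(k)$ (which can and does happen, since the block-monotonicity of $w$ gives nothing across blocks), the entry $x_{w(k'),k}$ is one of the \emph{unconstrained} coordinates of $\calX_w$, so the $k'$-component of $\pi_j(e_k(x))$ need not vanish. A concrete instance: with $w=5724613$, $\adot=(2,5)$, one has $\pi_2(e_1(x))=(1,0,x_{2,1},x_{4,1},0)$, which is not $\epsilon_1$ for generic $x$. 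The repair is small: the identity $\pi_j(e_k(x))=\epsilon_k$ does hold precisely for $k$ in the block $(a_{j-1},a_j]$ --- there every $k'$ with $k<k'\le a_j$ has $w(k')>w(k)$, so the corresponding entries vanish by the constraint $x_{i,k}=0$ for $i>w(k)$. Since $\pi_j\colon E_{a_j}\to\C^{a_j}$ is an isomorphism depending only on the flag, this pins down $e_k(x)=(\pi_j|_{E_{a_j}})^{-1}(\epsilon_k)$ for $a_{j-1}<k\le a_j$; iterating over $j=1,\dotsc,s$ recovers all columns of $x$ from $\Edot(x)$ and gives injectivity.
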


An entry $(i,j)$ is unconstrained for matrices in $\calX_w$ when $i<w(j)$ and there is no $k<j$ with
$i=w(k)$.
As $w$ is a permutation, there is some $k>j$ with $i=w(k)$.
Thus the unconstrained entries in $\calX_w$ correspond to inversions in the permutation $w$, and so we conclude
that $\dim\calX_w=\ell(w)$, the number of inversions in $w$.

\subsection{Determinantal formulation of a Schubert problem}\label{SS:traditional}

Let $\bw=(w_1,\dotsc,w_r)$ be a Schubert problem and suppose that $\Fdot^1,\dotsc,\Fdot^r$ are general complete
flags.
Choosing a basis for $\C^n$, if necessary, we may assume that $\Fdot^1$ is the standard coordinate flag.
Let $\Fdot^2,\dotsc,\Fdot^r$ be $n\times n$ matrices corresponding to the flags of the same name.
Then, in the local Steifel coordinates $\calX_{w_1}$ for $X_{w_1}\Fdot^1$, the instance
\[
   X_{w_1}\Fdot^1\,\cap\,
   X_{w_2}\Fdot^2\,\cap\, \dotsb \,\cap\,
   X_{w_r}\Fdot^r
\]
of the Schubert problem is given by the rank conditions
 \[
   \mbox{rank}\, \bigl(\, F_i^k\,\mid\, E_{a_j}(x)\,\bigr)\ \leq\ 
    i+a_j - r_{i,j}(w_k)
 \]
for $i=1,\dotsc,n$, $j=1,\dotsc,s$, and $k=2,\dotsc,r$.
These rank conditions are equivalent to the vanishing of minors of appropriate sizes of these matrices.
As we discussed, this typically involves more equations than variables.
Call this the \demph{determinantal formulation} of the Schubert problem.

%
\section{Lifted square formulations for Schubert problems}\label{S:lifted}

We give a new formulation for Schubert varieties as complete intersections in that the number of variables is
equal to the sum of the dimension of the Schubert variety and the number of equations. 
These equations are bilinear when we use Stiefel coordinates for the flag manifold.
This leads to a square formulation of any Schubert problem.
In Subsection~\ref{SS:improved} we explain an improvement to this formulation.

Fix a sequence $\adot\colon a_1<\dotsb<a_s<n$ and let $\Edot$ be a flag of
type $\adot$ in $\C^n$.
A complete flag $\Fdot$ in $\C^n$ induces complete flags on each quotient vector space $E_{a_j}/E_{a_{j-1}}$ for 
$j=1,\dotsc,s$. 
The subspaces in the induced flag on $E_{a_j}/E_{a_{j-1}}$ are
 \begin{equation}\label{Eq:induced_flag}
    \bigl( (E_{a_j}\cap F_k) \ +\ E_{a_{j-1}}\bigr)/ E_{a_{j-1}}
    \qquad\mbox{\rm for } k=1,\dotsc,n\,.
 \end{equation}
If $w$ is the unique permutation in $W^{\adot}$ such that $\Edot\in X^\circ_w\Fdot$, so that $\Edot$ and
$\Fdot$ have relative position $w$, then the
subspaces~\eqref{Eq:induced_flag} in the flag on
$E_{a_j}/E_{a_{j-1}}$ induced by $\Fdot$ are 
\[
     \bigl((E_{a_j}\cap F_{w(k)}) \ +\ E_{a_{j-1}}\bigr)/ E_{a_{j-1}}
    \qquad\mbox{\rm for } a_{j-1}<k\leq a_j\,.
\]  
(Recall that $a_{j-1}<i<k\leq a_j$ implies that $w(i)<w(k)$ and thus $F_{w(i)}\subset F_{w(k)}$.)
When $\Edot=\sspan{e_1,\dotsc,e_{a_s}}_{\adot}$ for independent vectors $e_1,\dotsc,e_{a_s}$, we have 
another complete flag in each quotient space $E_{a_j}/E_{a_{j-1}}$ for $j=1,\dotsc,s$,
whose subspaces are
 \begin{equation}\label{Eq:other_flag}
     \bigl(\langle e_{k}, e_{k+1},\dotsc,e_{a_j}\rangle \ +\ E_{a_{j-1}}\bigr)/ E_{a_{j-1}}
    \qquad\mbox{\rm for } a_{j-1}<k\leq a_j\,.
 \end{equation}
We say that $(e_1,\dotsc,e_{a_s})$ and $\Fdot$ are in \demph{$\adot$-general position} if for each
$j=1,\dotsc,s$, the two flags~\eqref{Eq:induced_flag} and~\eqref{Eq:other_flag} on  $E_{a_j}/E_{a_{j-1}}$
are in linear general position.
That is, an intersection $G\cap H$ of subspaces, one from each
flag, has the expected dimension $\dim G + \dim H - \dim(E_{a_j}/E_{a_{j-1}})$.

The set of those $(e_1,\dotsc,e_{a_s})$ with $\Edot=\sspan{e_1,\dotsc,e_{a_s}}_{\adot}$ that are in
$\adot$-general position with $\Fdot$ forms an open and dense subset of those $(e_1,\dotsc,e_{a_s})$ with
$\Edot=\sspan{e_1,\dotsc,e_{a_s}}_{\adot}$.
Indeed, there is a dense open subset of the general linear group giving linear combinations of 
the sublist $e_{a_{j-1}+1},\dotsc,e_{a_j}$ which induce a flag on $E_{a_j}/E_{a_{j-1}}$ in linear general
position with the flag induced by $\Fdot$.

%
\subsection{Lifted square formulation}\label{SS:lifted}
The lifted square formulation relies upon the following lemma.
For a number $k\leq a_s$, define $\defcolor{\lceil k\rceil_{\adot}}:=\min\{a_j\mid k\leq a_j\}$, which is the
smallest number in $\adot$ that is at least as large as $k$.

\begin{lemma}\label{L:square}
 Suppose that $\Edot=\sspan{e_1,\dotsc,e_{a_s}}_{\adot}$ is a flag of type $\adot$, $\Fdot$ is a complete
 flag in $\adot$-general position with $(e_1,\dotsc,e_{a_s})$, and $w\in W^{\adot}$.
 Then $\Edot\in X_w^\circ\Fdot$ if and only if for each $k=1,\dotsc,a_s$ there are numbers
 $\alpha_{k,i}$ for $i\leq\lceil k\rceil_{\adot}$ with $w(k)<w(i)$ such that 
 \begin{equation}\label{Eq:newBasis}
     g_k\ :=\ e_k\ +\ \sum_{\substack{i\leq\lceil k\rceil_{\adot}\\ w(k)<w(i)}} \alpha_{k,i} e_i\,,
 \end{equation}
 where $g_k\in F_{w(k)}\smallsetminus F_{w(k)-1}$.
 Furthermore these numbers $\alpha_{k,i}$ are the unique numbers with this property.
\end{lemma}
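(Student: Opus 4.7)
The plan is to reduce the statement to Lemma~\ref{L:SCell_Span}, which characterizes $\Edot \in X_w^\circ \Fdot$ by the existence of some basis $(g_1,\dots,g_{a_s})$ with $g_k\in F_{w(k)}\setminus F_{w(k)-1}$ and $\Edot=\sspan{g_1,\dots,g_{a_s}}_{\adot}$. What must be added is that under $\adot$-general position this basis can be chosen uniquely in the normalized form $g_k = e_k + \sum_{i\leq\lceil k\rceil_\adot,\,w(k)<w(i)} \alpha_{k,i}\, e_i$.

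I would build the $g_k$ block by block, inducting on $j=1,\dots,s$, working in the quotient $V_j := E_{a_j}/E_{a_{j-1}}$. In $V_j$ the flag induced by $\Fdot$ has subspaces $\bar F^{(j)}_{w(k)}$ of dimension $k-a_{j-1}$ for $a_{j-1}<k\leq a_j$ (from the Schubert equality of dimensions), and the suffixes $\langle\bar e_k,\dots,\bar e_{a_j}\rangle$ give the $\Edot$-induced complete flag. By $\adot$-general position these are in linear general position. A routine dimension count shows that $\bar F^{(j)}_{w(k)}\cap\langle\bar e_k,\dots,\bar e_{a_j}\rangle$ is one-dimensional while $\bar F^{(j)}_{w(k)}\cap\langle\bar e_{k+1},\dots,\bar e_{a_j}\rangle$ is zero, so there is a unique nonzero vector in the first intersection of the form $\bar g_k = \bar e_k+\sum_{k<i\leq a_j}\alpha_{k,i}\bar e_i$. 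This pins down the within-block coefficients.

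To lift $\bar g_k$, set $h_k := e_k + \sum_{k<i\leq a_j}\alpha_{k,i}e_i$. Since $\bar h_k\in \bar F^{(j)}_{w(k)}$, there exists $v\in E_{a_{j-1}}$ with $h_k+v\in F_{w(k)}$, determined modulo $F_{w(k)}\cap E_{a_{j-1}}$. By induction the $g_i$ for $i\leq a_{j-1}$ are already in hand, and the Schubert condition together with $g_i\in F_{w(i)}\setminus F_{w(i)-1}$ imply that $\{g_i:i\in I^c\}$, with $I^c:=\{i\leq a_{j-1}:w(i)\leq w(k)\}$, is a basis of $F_{w(k)}\cap E_{a_{j-1}}$, while $\{g_i:i\in I\}$, with $I:=\{i\leq a_{j-1}:w(i)>w(k)\}$, spans a complement. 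Requiring $v\in\mathrm{span}\{g_i:i\in I\}$ picks a unique $v$, and substituting the inductive form of those $g_i$ back into the $e$-basis yields coefficients $\alpha_{k,l}$ supported on $l\in I$, as required. That $g_k\notin F_{w(k)-1}$ follows from the same general-position count, since $\bar F^{(j)}_{w(k)-1}$ has codimension one in $\bar F^{(j)}_{w(k)}$ and meets $\langle\bar e_k,\dots,\bar e_{a_j}\rangle$ only at zero.

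For the converse, the matrix expressing $(g_k)$ in the basis $(e_k)$ becomes upper unitriangular once rows and columns are reordered by decreasing $w$-value, so it is invertible and respects the block structure; hence $\Edot=\sspan{g_1,\dots,g_{a_s}}_{\adot}$, and Lemma~\ref{L:SCell_Span} then gives $\Edot\in X_w^\circ\Fdot$. The main obstacle is the bookkeeping in the lifting step: one must check that the substitution from $\{g_i:i\in I\}$ back to $\{e_i:i\in I\}$ preserves the support constraints on the $\alpha_{k,i}$, which reduces to the observation that the transition between these two sets of vectors is itself upper unitriangular in the $w$-order, by the inductive form of the $g_i$.
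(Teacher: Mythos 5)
Your proposal is correct and follows essentially the same route as the paper's proof: induction on the block index $j$, using $\adot$-general position of the two induced flags in $E_{a_j}/E_{a_{j-1}}$ to pin down the within-block coefficients, then lifting to $F_{w(k)}$ by adding a unique element of $E_{a_{j-1}}$ expressed in terms of the previously constructed $g_i$ with $w(i)>w(k)$, and finally substituting back to the $e$-basis while checking the support constraints; the unitriangularity observation also handles the converse exactly as in the paper. The only cosmetic difference is that you package the uniqueness of the correction $v$ via the direct-sum decomposition $E_{a_{j-1}} = (F_{w(k)}\cap E_{a_{j-1}}) \oplus \langle g_i : i\in I\rangle$, where the paper argues it directly from $g_i\in F_{w(i)}\smallsetminus F_{w(i)-1}$ — these are the same fact stated two ways.
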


We illustrate this lemma with two examples.

\begin{example}\label{Ex:lifted_1}
 Suppose that $E_3:=\langle e_1,e_2,e_3\rangle$ lies in the Schubert cell $X_{358\,12467}^\circ\Fdot$ in the
 Grassmannian $\Gr(3;8)$ and $(e_1,e_2,e_3)$ is in general position with $\Fdot$.
 Then there are constants $\alpha_{1,2}$, $\alpha_{1,3}$, and $\alpha_{2,3}$ such that if
 \begin{eqnarray}
   g_1&:=& \makebox[97pt][r]{$e_1 + \alpha_{1,2}e_2 + \alpha_{1,3}e_3$}\,,\nonumber\\
   g_2&:=& \makebox[97pt][r]{$e_2 + \alpha_{2,3}e_3$}\,,\ \mbox{ and}\label{Eq:g_formula}\\
   g_3&:=& \makebox[97pt][r]{$e_3$}\,,\nonumber
 \end{eqnarray}
 then $g_1\in F_3$, $g_2\in F_5$, and $g_3\in F_8=\C^8$.

 View these now as variables and equations for membership in  $X_{358\,12467}\Fdot$.
 The linear forms defining the subspaces in $\Fdot$ give $5+3+0=8$ equations on the
 vectors $e_1,e_2,e_3$ and variables $\alpha_{1,2},\alpha_{1,3},\alpha_{2,3}$.
 As these linear forms are are general, they  define a subset of codimension eight which
 when projected to the Grassmannian gives a subset of codimension five, which is the
 codimension of $X_{358\,12467}\Fdot$. \hfill$\diamond$
\end{example}

\begin{example}\label{Ex:lifted_2}
 Suppose that $\Edot:=\sspan{e_1,e_2,e_3,e_4}_{2<4}$ lies in the Schubert cell
 $X_{59\,47\,12368}^\circ\Fdot$ of the flag manifold $\Fl(2,4;9)$ and $(e_1,e_2,e_3,e_4)$ is in general
 position with $\Fdot$. 
 Then there are constants $\alpha_{1,2}$, $\alpha_{3,1}$, $\alpha_{3,2}$, $\alpha_{3,4}$, and
 $\alpha_{4,2}$ such that if 
 \begin{eqnarray*}
   g_1&:=& \makebox[72.6pt][r]{$e_1 + \alpha_{1,2}e_2$}\,, \\
   g_2&:=& \makebox[72.6pt][r]{$e_2$}\,,\\
   g_3&:=& \makebox[72.6pt][r]{$\alpha_{3,1}e_1+\alpha_{3,2}e_2$} +
             \makebox[54pt][r]{$e_3 + \alpha_{3,4}e_4$}\,,\ \mbox{ and}\\
   g_3&:=& \makebox[72.6pt][r]{$\alpha_{4,2}e_2$} + \makebox[54pt][r]{$e_4$}\,,
 \end{eqnarray*}
 then $g_1\in F_5$, $g_2\in F_9=\C^9$, $g_3\in F_4$, and $g_4\in F_7$.
 As a formulation for membership in $X_{59\,47\,12368}\Fdot$, the linear forms defining
 the $F_i$ give $4+0+5+2=11$ equations on the vectors $e_1,e_2,e_3,e_4$ and five variables
 $\alpha_{k,i}$. 
 As these forms are general, they define a subset of codimension eleven that when projected to $\Fl(2,4;9)$
 gives a subset of codimension six, which is the codimension of $X_{59\,47\,12368}\Fdot$.
 Since the membership equations ($g_i\in F_{w(i)}$, etc.) are linear in the variables $\alpha_{k,i}$, the
 fibers over points of $X_{59\,47\,12368}\Fdot$ are affine spaces.
 The equality of dimensions and surjectivity implies that the fiber over a general point is a singleton, which
 is the unicity  assertion in Lemma~\ref{L:square}.
\hfill$\diamond$
\end{example}

\begin{proof}[Proof of Lemma~\ref{L:square}]
 Suppose first that $g_k\in F_{w(k)}\smallsetminus F_{w(k)-1}$ where  $g_1,\dotsc,g_{a_s}$ are defined
 using~\eqref{Eq:newBasis} for some constants $\alpha_{k,i}$.
 Then $\Edot=\sspan{g_1,\dotsc,g_{a_s}}_{\adot}$, as the 
 expressions~\eqref{Eq:newBasis} are unitriangular.
 Lemma~\ref{L:SCell_Span} then implies that $\Edot\in X^\circ_w\Fdot$.

 For the other direction, we use induction on $j$ to construct unique constants $\alpha_{k,i}$ such that
 the vector $g_k$ defined by~\eqref{Eq:newBasis} satisfies $g_k\in F_{w(k)}\smallsetminus F_{w(k)-1}$ for
 $k\leq a_j$. 
 We will suppose that that for each $k\leq a_{j-1}$ there are unique constants
 $\alpha_{k,i}$ for $i\leq\lceil k\rceil_{\adot}$ with $w(k)<w(i)$ such that if $g_k$ is the linear
 combination~\eqref{Eq:newBasis}, then $g_k\in F_{w(k)}\smallsetminus F_{w(k)-1}$, and use this to obtain the
 constants  $\alpha_{k,i}$ for $a_{j-1}<k\leq a_j$.
 This is no assumption in the base case $(j=1)$ of this construction.

 By our assumption on $(e_1,\dotsc,e_{a_s})$ and $\Fdot$, the two flags in $E_{a_j}/E_{a_{j-1}}$, 
\[
  E_{a_{j-1}}\ \subsetneq\ 
  E_{a_{j-1}}+\langle e_{a_j} \rangle\  \subsetneq\  \dotsb\  \subsetneq \ 
  E_{a_{j-1}}+\langle e_{a_{j-1}+2},\dotsc,e_{a_j}\rangle\  \subsetneq\  E_{a_j}\,,\ \mbox{and}
\]
\[
  E_{a_{j-1}}\ \subsetneq\ 
  E_{a_{j-1}}+(F_{w(a_{j-1}+1)}\cap E_{a_j})\  \subsetneq\  \dotsb\  \subsetneq \ 
  E_{a_{j-1}}+(F_{w(a_j-1)}\cap E_{a_j})\  \subsetneq\  E_{a_j}\,,
\]
 are opposite.
 In particular, for any $a_{j-1}<k,i\leq a_j$, we have that
 \begin{equation}\label{Eq:Piece}
    \Bigl(E_{a_{j-1}}+ ( F_{w(k)}\cap E_{a_j})\Bigr)\ \cap\ 
    \Bigl(E_{a_{j-1}}+\langle e_{i},\dotsc,e_{a_j}\rangle\Bigr)
 \end{equation}
 has dimension $\max(0,k{+}1{-}i)$ modulo $E_{a_{j-1}}$.
 This implies that there are constants $\alpha_{k,\ell}$ for $k<\ell\leq a_j$ and an element $e\in E_{a_{j-1}}$
 such that the sum
 \begin{equation}\label{Eq:summ}
  e_k\ +\ \sum_{\ell=k+1}^{a_j} \alpha_{k,\ell} e_\ell\ \ +\ e
 \end{equation}
 lies in $F_{w(k)}$.
 In fact, the sum~\eqref{Eq:summ} lies in $F_{w(k)}\smallsetminus F_{w(k)-1}$.
 Indeed, as $\Edot\in X^\circ_w\Fdot$, we have that 
 $F_{w(k)-1}\cap E_{a_j}\subsetneq F_{w(k)}\cap E_{a_j}$, and so the dimension of~\eqref{Eq:Piece} drops
 if we replace $F_{w(k)}$ by $F_{w(k)-1}$.
 This also implies that the numbers  $\alpha_{k,\ell}$ are unique.

 The element $e\in E_{a_{j-1}}$ is some linear combination of $e_1,\dotsc,e_{a_{j-1}}$
 and thus also of $g_1,\dotsc,g_{a_{j-1}}$.
 Since $g_i\in F_{w(i)}$, those $g_i$ with $w(i)<w(k)$ are not needed for the sum~\eqref{Eq:summ} to lie in
 $F_{w(k)}$, and thus there are constants $\beta_i$ for $i\leq a_{j-1}$ with $w(k)<w(i)$ such that 
 \begin{equation}\label{Eq:g_k}
  g_k\ :=\ e_k\ +\ \sum_{\ell=k+1}^{a_j} \alpha_{k,\ell} e_\ell\ \ +\ 
    \sum_{\substack{i\leq a_j\\ w(k)<w(i)}} \beta_i g_i
 \end{equation}
 lies in $F_{w(k)}\smallsetminus F_{w(k)-1}$.
 As each $g_i$ in the second sum lies in $F_{w(i)}\smallsetminus F_{w(i)-1}$, the constants
 $\beta_i$ are unique.
 To obtain the expression~\eqref{Eq:newBasis} for $g_k$ first use the formula~\eqref{Eq:newBasis} for
 each $g_i$ appearing in~\eqref{Eq:g_k} to rewrite the second sum as a linear combination of $e_\ell$
 for $\ell\leq a_{j-1}$ with $w(k)<w(i)<w(\ell)$, and then use that $w\in W^{\adot}$ to see that 
 $\{k{+}1,\dotsc,a_j\}$ is the set of $i$ in the interval $(a_{j-1},a_j]$ with $w(k)<w(i)$.
 The unicity of the constants $\alpha_{k,i}$ follows from that of the constants $\alpha_{k,\ell}$ and
 $\beta_i$, and our induction hypothesis.
\end{proof}

\begin{remark}\label{R:SquareFormulation}
 Lemma~\ref{L:square} leads to a square formulation for membership in $X_w\Fdot$ for flags in $\Fl(\adot;n)$ as
 follows.
 \begin{enumerate}
  \item Pick Stiefel coordinates $\calX_{\adot}$ for $\Fl(\adot;n)$.
         For $x\in\calX_{\adot}$, we have the partial flag, 
         $\Edot(x)=\sspan{e_1(x),\dotsc,e_{a_s}(x)}_{\adot}\in \Fl(\adot;n)$.
  \item Choose lifting coordinates
  \begin{equation}\label{Eq:lifting_coordinates} 
         \alpha\ =\ \{\alpha_{k,i}\mid k=1,\dotsc,a_s\,,\ 
              i\leq\lceil k\rceil_{\adot} \mbox{ with }w(i)>w(k)\}
  \end{equation}
  and form the vectors
 \[
     g_k(x,\alpha)\ :=\ e_k(x)\ +\ \sum_{\substack{i\leq\lceil k\rceil_{\adot}\\ w(k)<w(i)}} \alpha_{k,i}
     e_i(x)\,,
\]
    for $k=1,\dotsc,a_s$.
  \item Given independent linear forms $f_{1},\dotsc,f_{n}$ such that
         $F_j$ is defined by the vanishing of $f_{j+1},\dotsc,f_n$, our equations for 
         $\Edot(x)\in X_w\Fdot$ are
\[
         f_j(g_i(x,\alpha))\ =\ 0\qquad\mbox{for } i=1,\dotsc,a_s\mbox{ and }j>w(i)\,.
\]
 \end{enumerate}
  These equations are bilinear in the sets of variables $x\in\calX_{\adot}$ and $\alpha$.
 \hfill$\diamond$
\end{remark}

\begin{definition}
 Call the formulation for membership in $X_w\Fdot$ for flags in $\Fl(\adot;n)$ of
 Remark~\ref{R:SquareFormulation} the \demph{lifted formulation} for a Schubert variety.
 Write \defcolor{$\alpha(w)$} for the set of lifting coordinates~\eqref{Eq:lifting_coordinates} 
 and \defcolor{$|\alpha(w)|$} for the number of these coordinates, which is
 \begin{equation}\label{Eq:number_variables}
   |\alpha(w)|\ =\ 
     \sum_{k=1}^{a_s} \#\{ i\leq \lceil k\rceil_{\adot}\mid w(i)>w(k)\}\,.
 \end{equation}
\end{definition}

\begin{theorem}\label{Thm:CompleteIntersection}
 The lifted formulation for membership in $X_w\Fdot\subset\Fl(\adot)$ is a complete intersection. 
\end{theorem}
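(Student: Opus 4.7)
The plan is to show that the zero locus $V \subset \calX_{\adot} \times \C^{|\alpha(w)|}$ of the $N := \sum_{k=1}^{a_s}(n-w(k))$ equations has codimension exactly $N$.

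I would begin with the combinatorial identity $N = |w| + |\alpha(w)|$. Rewriting
\[
  N \;=\; \#\{(k,l) : k\le a_s,\ l\in[n],\ w(l) > w(k)\}
\]
via the bijection $v \leftrightarrow w^{-1}(v)$, I split the count by whether $l \le \lceil k\rceil_{\adot}$ or $l > \lceil k\rceil_{\adot}$. The first subcount equals $|\alpha(w)|$ directly from~\eqref{Eq:number_variables}. For the second, since $w \in W^{\adot}$ is increasing on each block $(a_{j-1}, a_j]$, any inversion $(k,l)$ of $w$ with $k\le a_s$ must satisfy $l > \lceil k\rceil_{\adot}$; the total number of pairs $(k,l)$ with $k\le a_s$ and $l>\lceil k\rceil_{\adot}$ is $\sum_{k=1}^{a_s}(n-\lceil k\rceil_{\adot}) = \dim(\adot)$, so the number of non-inversions in that range is $\dim(\adot) - \ell(w) = |w|$.

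Next I would use Lemma~\ref{L:square} to determine the dimension of $V$. If $(x,\alpha) \in V$, then $g_k(x,\alpha) \in F_{w(k)}$ for every $k$; since the relation~\eqref{Eq:newBasis} is unitriangular modulo $E_{a_{j-1}}(x)$, the vectors $g_1,\dotsc,g_{a_s}$ still span $\Edot(x)$, and counting the $g_k$ with $w(k)\le i$ and $k\le a_j$ yields $\dim(F_i\cap E_{a_j}(x))\ge r_{i,j}(w)$, so $\Edot(x)\in X_w\Fdot$. Conversely, Lemma~\ref{L:square} provides, over the open dense subset of $\calX_{\adot}$ on which $\Edot(x)\in X_w^\circ\Fdot$ and $(e_i(x))$ is in $\adot$-general position with $\Fdot$, a unique $\alpha$ with $(x,\alpha)\in V$. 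The projection $V\to\calX_{\adot}$ is therefore birational onto the preimage of $X_w\Fdot$, which has dimension $\ell(w)+\sum_j a_j(a_j-a_{j-1}) = na_s-|w|$. Hence $V$ has an irreducible component of codimension $|w|+|\alpha(w)| = N$.

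The conclusion follows by combining Krull's principal ideal theorem, which forces every component of $V$ to have codimension at most $N$, with equidimensionality. The most delicate step is verifying the latter: one must rule out higher-dimensional components supported over the degenerate locus where the hypothesis of Lemma~\ref{L:square} fails. Because the fibers of $V\to\calX_{\adot}$ are cut out by equations that are affine linear in $\alpha$ and decouple into blocks indexed by $k$, any extra fiber dimension over a closed subvariety is matched by a rank drop in the coefficient matrices $(f_j(e_i(x)))$; the determinantal codimension bound for each rank-drop locus then shows no higher-dimensional component can appear, so $V$ is equidimensional of codimension $N$ and is a complete intersection.
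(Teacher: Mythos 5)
Your combinatorial identity $N=|w|+|\alpha(w)|$ (equivalently, number of variables minus number of equations equals $\ell(w)$) is exactly the content of the paper's proof, and your bijective route to it is cleaner than theirs: writing $N=\#\{(k,l):k\le a_s,\ w(l)>w(k)\}$ and splitting by $l\le\lceil k\rceil_\adot$ versus $l>\lceil k\rceil_\adot$ gives $|\alpha(w)|$ and $|w|$ directly, once one observes (as you do) that for $w\in W^\adot$ every inversion $(k,l)$ with $k<l$ has $k\le a_s$ and $l>\lceil k\rceil_\adot$, so the second block contains all $\ell(w)$ inversions out of $\dim(\adot)$ pairs. The paper instead rearranges the displayed formulas for $|\alpha(w)|$, $\dim(\adot)$, and $N$; the two computations are equivalent. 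Note also that you silently take $\calX_\adot$ to be the full set of rank-$a_s$ Stiefel matrices of dimension $n a_s$, whereas the paper's count of variables uses an affine chart of dimension $\dim(\adot)$; the extra fiber dimension of $\calX_\adot\to\Fl(\adot;n)$ cancels in your codimension computation, so the arithmetic agrees.

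However, the paper's proof \emph{is} the dimension count and nothing more: ``complete intersection'' in this paper means the arithmetic balances (see the opening of Section~\ref{S:lifted}), and Lemma~\ref{L:square} serves only to identify a dense piece of $V$ with the Schubert cell, producing one component of the right dimension. You go further and try to show $V$ itself has pure codimension $N$, and your final paragraph does not close that argument. Observing that the fiber equations decouple into affine-linear blocks indexed by $k$ is correct, but it is not enough to say that any extra fiber dimension ``is matched by a rank drop'' in $(f_j(e_i(x)))_{j>w(k),\,i\in\alpha_k}$: you would need to show that the rank-drop loci, together with the consistency conditions of the affine-linear system, have codimension in $\calX_\adot$ large enough that no stratum can contribute dimension exceeding $\ell(w)$, and the generic determinantal codimension bounds you invoke do not apply to these highly structured matrices without a further argument exploiting the generality of $\Fdot$. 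So the part of your proof that corresponds to the paper's is correct and arguably more transparent; the additional equidimensionality claim is unproved and also goes beyond what the theorem, as the paper understands it, asserts.
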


\begin{proof}
 We must show that $\dim X_w\Fdot$ equals the number of variables 
 minus the number of equations.
 The number of equations is the sum of codimensions of the $F_{w(k)}$ for $k\leq a_s$,
 \begin{equation}\label{Eq:NEqs}
   \sum_{k=1}^{a_s} n-w(k)\ =\  n\cdot a_s\ -\ \sum_{k=1}^{a_s} w(k)\,.
 \end{equation}
 The number of variables is the dimension of $\Fl(\adot;n)$, as calculated in~\eqref{Eq:dimFlagVariety}
 \begin{equation}\label{Eq:dim}
  \dim(\adot)\ =\  n\cdot a_s\ -\ \sum_{j=1}^s (a_j-a_{j-1})a_j\,,
 \end{equation}
 where $a_0=0$, plus the number $|\alpha(w)|$ of the variables $\alpha_{k,i}$.
 We rewrite~\eqref{Eq:number_variables} as 
 \begin{multline*}
   \quad \sum_{k=1}^{a_s} \bigl(\lceil k\rceil_{\adot}
         -\#\{ i\leq k\mid w(i)\leq w(k)\}\bigr)\\
    =\  \Bigl(\sum_{k=1}^{a_s} \lceil k\rceil_{\adot} \Bigr)
         \  -\#\{ i\leq k\leq a_s \mid w(i)\leq w(k)\}\bigr)\,.\quad
 \end{multline*}
 The first equality uses that if $a_j<i<k\leq a_{j+1}$, then $w(i)<w(k)$, as $w\in W^{\adot}$.
 We rewrite this as 
 \begin{equation}\label{Eq:NewVariables}
   \Bigl(\sum_{j=1}^s (a_j-a_{j-1})a_j\Bigr)\ \ -\ 
    \#\{ i \leq k\leq a_s \mid w(i)\leq w(k)\}\,.
 \end{equation}
 Using that $w\in W^{\adot}$, the linear combination $\eqref{Eq:NewVariables}+\eqref{Eq:dim}-\eqref{Eq:NEqs}$
 becomes  
 \begin{multline*}
  \quad
  \Bigl(\sum_{k=1}^{a_s} w(k)\Bigr)\ -\ \#\{i \leq k\leq a_s\mid w(i)\leq w(k)\} \\
    =\ \#\{i<k\mid w(i)>w(k)\}\ =\ 
      \ell(w)\ =\ \dim X_w\Fdot\,,\quad
 \end{multline*}
 which completes the proof.
\end{proof}

\begin{remark}\label{R:Square_Formulation}
 The lifted formulation of Remark~\ref{R:SquareFormulation} for a Schubert variety leads to a square
 formulation for Schubert problems, following Subsection~\ref{SS:traditional}.
 Suppose that $\bw:=(w_1,\dotsc,w_r)$ is a Schubert problem on $\Fl(\adot;n)$.
 Let $\Fdot^1,\dotsc,\Fdot^r$ be general flags and consider the intersection of Schubert varieties
 \begin{equation}\label{Eq:SchubertProblem}
   X_{w_1}\Fdot^1\,\cap\,
   X_{w_2}\Fdot^2\,\cap\, \dotsb \,\cap\,
   X_{w_r}\Fdot^r
 \end{equation}
 Assume that $\Fdot^1$ is the standard coordinate flag and use Steifel
 coordinates $\calX_{w_1}$ for the Schubert cell $X_{w_1}^\cdot\Fdot^1$ to formulate the
 intersection~\eqref{Eq:SchubertProblem}.
 Replacing the determinantal rank conditions for membership in each Schubert variety
 $X_{w_2}\Fdot^2,\dotsc,X_{w_r}\Fdot^r$ by the lifted square formulation gives
 the \demph{lifted formulation} for the Schubert problem $\bw$.
 It uses 
\[
   \ell(w_1) + |\alpha(w_2)| + \dotsb + |\alpha(w_r)|
\]
 variables and bilinear equations.
\hfill$\diamond$
\end{remark}

Since the intersection~\eqref{Eq:SchubertProblem} is transverse, it
is zero-dimensional (or empty).
This gives the following corollary to
Theorem~\ref{Thm:CompleteIntersection}.

\begin{corollary}\label{Cor:SquareXw1}
 The lifted formulation for membership in the intersection~$\eqref{Eq:SchubertProblem}$ is a complete intersection
 in the local coordinates $\calX_{w_1}$. 
\end{corollary}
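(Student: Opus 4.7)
The plan is a dimension count, invoking Theorem~\ref{Thm:CompleteIntersection} factor by factor together with Kleiman's transversality theorem. Working in the lifted space $\calX_{w_1}\times\prod_{k\geq 2}\C^{|\alpha(w_k)|}$, I would enumerate the variables and equations contributed by each Schubert condition, then check that the counts agree and invoke zero-dimensionality.

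On the variable side, $\calX_{w_1}$ contributes $\ell(w_1)$ free parameters, and the lifted formulation of each $X_{w_k}\Fdot^k$ with $k\geq 2$ adjoins $|\alpha(w_k)|$ lifting variables, for a total of $\ell(w_1)+\sum_{k\geq 2}|\alpha(w_k)|$. On the equation side, the proof of Theorem~\ref{Thm:CompleteIntersection} shows that the lifted formulation of $X_{w_k}\Fdot^k$ imposes exactly $|w_k|+|\alpha(w_k)|$ bilinear equations (the difference between the variable and dimension counts established there); this count depends only on $w_k$ and $\Fdot^k$, not on which Stiefel chart is used for the underlying flag, so the grand total is $\sum_{k\geq 2}(|w_k|+|\alpha(w_k)|)$. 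The excess of equations over variables is therefore $\sum_{k\geq 2}|w_k|-\ell(w_1)$, which vanishes since $\bw$ is a Schubert problem: $\sum_{k=1}^r|w_k|=\dim(\adot)=\ell(w_1)+|w_1|$.

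Finally, Kleiman's theorem makes the intersection~\eqref{Eq:SchubertProblem} transverse and hence zero-dimensional, and it places that intersection inside the open Schubert cells $X^\circ_{w_k}\Fdot^k$, where the general position hypothesis of Lemma~\ref{L:square} holds for generic choices of $\Fdot^2,\dotsc,\Fdot^r$. Lemma~\ref{L:square} then asserts that each point of the Schubert intersection admits a unique lift to a tuple $(\alpha^{(2)},\dotsc,\alpha^{(r)})$ of lifting coordinates, so the solution set of the lifted square system is zero-dimensional in an ambient space where the number of equations equals the number of variables---the definition of a complete intersection. The only subtle step is confirming the bijection between lifted solutions and Schubert solutions; this is handled in full by the unicity clause of Lemma~\ref{L:square}, so I expect no serious obstacle beyond bookkeeping.
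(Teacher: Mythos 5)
Your argument is correct and follows the paper's own (largely implicit) reasoning: the equation/variable count you derive from the proof of Theorem~\ref{Thm:CompleteIntersection} is exactly the count asserted in Remark~\ref{R:Square_Formulation}, and the zero-dimensionality is Kleiman transversality, which is precisely what the paper invokes just before stating the corollary. Your version is merely more explicit about why each $X_{w_k}\Fdot^k$ contributes $|w_k|+|\alpha(w_k)|$ equations and about the role of Lemma~\ref{L:square}'s unicity clause in controlling the fibers of the lift.
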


\begin{remark}\label{R:ReductionXw1nXw2}
 For Grassmannians, there are Stiefel coordinates parametrizing the
 intersection $X_{w_1}\Fdot^1\,\cap\,X_{w_2}\Fdot^2$~\cite[\S~3.1]{HaHS}.
 These involve $\dim(\Gr(a_1;n))-|w_1|-|w_2|=\ell(w_1)-|w_2|$
 variables and lead to a lifted formulation of~\eqref{Eq:SchubertProblem} using
\[
   \ell(w_1) - |w_2| + |\alpha(w_3)| + \dotsb + |\alpha(w_r)|
\]
 variables and bilinear equations.
 This presents~\eqref{Eq:SchubertProblem} as a complete intersection using $|w_2|+|\alpha(w_2)|$ fewer
 equations and variables than the formulation of Corollary~\ref{Cor:SquareXw1}. 
\hfill$\diamond$
\end{remark}

\subsection{Reduced lifted formulation}\label{SS:improved}
We introduce an improvement to the lifted square formulation, motivating it through three examples.

\begin{example}\label{Ex:improved_lifted_1}
 Consider the lifted formulation for the Schubert variety $X_{w}\Fdot$ in $\Gr(3;8)$ where $w=458\,12367$.
 Suppose that $E_3=\langle e_1,e_2,e_3\rangle$ where $(e_1,e_2,e_3)$ come from Steifel coordinates $\calX$ for
 $\Gr(3;8)$ and involve 15 variables.
 The lifted formulation uses three new variables $\alpha_{1,2}$, $\alpha_{1,3}$, and $\alpha_{2,3}$ as in  
 Example~\ref{Ex:lifted_1} and we form the vectors $g_1,g_2,g_3$ as in~\eqref{Eq:g_formula}.
 Then $\Edot\in X_w\Fdot$ if and only if $g_1\in F_4$, $g_2\in F_5$, and $g_3\in F_8$,
 giving seven equations in $15+3$ variables to define the codimension four Schubert variety $X_w\Fdot$.

 It suffices to only require that $g_1$ and $g_2$ lie in $F_5$,
 for then some linear combination of the two will lie in $F_4$.
 This dispenses with one equation.
 Having done this, we may also dispense with the variable $\alpha_{1,2}$, and thereby obtain a reduction
 of one variable and one equation.
 Specifically, suppose that 
 \begin{eqnarray*}
   g_1&:=& \makebox[78.4pt][r]{$e_1 \hspace{24.8pt} + \alpha_{1,3}e_3$}\,,\\
   g_2&:=& \makebox[78.4pt][r]{$e_2 + \alpha_{2,3}e_3$}\,,\ \mbox{ and}\\
   g_3&:=& \makebox[78.4pt][r]{$e_3$}\,.
 \end{eqnarray*}
 Then $E_3\in X_{w}\Fdot$ if  $g_1,g_2\in F_5$.
 This gives six equations in $15+2$ variables to define $X_{w}\Fdot$ in the Steifel coordinates
 $\calX$ for $\Gr(3;8)$.
 \hfill$\diamond$
\end{example}

\begin{example}\label{Ex:improved_lifted_2}
 A similar reduction is possible in Example~\ref{Ex:lifted_2}.
 The requirement that $g_3\in F_4$ may be relaxed to $g_3\in F_5$, for then some linear combination of $g_3$
 and $g_1$ lies in $F_4$.
 This removes one bilinear equation, and we may dispense with $\alpha_{3,1}$.
 \hfill$\diamond$
\end{example}

\begin{example}\label{Ex:improved_lifted_3}
 Now consider the lifted formulation for $X_{w}\Fdot$ with $w=358\,47\,126$, which
 has codimension six in the 21-dimensional flag manifold $\Fl(3,5;8)$.
 Suppose that $e_1,\dotsc,e_5$ are the column vectors from the Steifel coordinates $\calX_{3<5}$ for
 $\Fl(3,5;8)$ and set $\Edot:=\sspan{e_1,\dotsc,e_5}_{3<5}$. 
 The lifted formulation uses seven variables $\alpha_{k,i}$ to form the vectors
 \begin{eqnarray*}
  g_1 &=& \makebox[97pt][r]{$e_1 + \alpha_{1,2}e_2 + \alpha_{1,3}e_3$}\,,\\
  g_2 &=& \makebox[97pt][r]{$e_2 + \alpha_{2,3}e_3$}\,,\\
  g_3 &=& \makebox[97pt][r]{$e_3$}\,,\\
  g_4 &=& \makebox[97pt][r]{$\alpha_{4,2}e_2 + \alpha_{4,3}e_3$} + 
            \makebox[54pt][r]{$e_4 + \alpha_{4,5}e_5$}\,,\ \mbox{ and}\\
  g_5 &=& \makebox[97pt][r]{$ \alpha_{5,3}e_3$} + 
             \makebox[54pt][r]{$e_5$}\,,
 \end{eqnarray*}
 which are required to lie in the subspaces
\[
   g_1\in F_3\,,\ g_2\in F_5\,,\ g_3\in F_8\,,\ g_4\in F_4\,,\ \mbox{ and }\ g_5\in F_7\,.
\]
 This gives $5+3+0+4+1=13$ bilinear equations in $|\adot|+|\alpha(w)|=21+7$ variables to define
 $X_w\Fdot$.

 Some of these conditions and variables are redundant.
 All that is needed is that $g_1\in F_3$, $g_2,g_4\in F_5$ and $g_3,g_5\in F_8$.
 The first three give $5+3+3=11$ bilinear equations and the last two give none.
 Similarly, the variables $\alpha_{4,2}$,  and $\alpha_{5,3}$ are not
 needed.
 Thus  $X_{w}\Fdot$ has a formulation involving five new variables and eleven
 bilinear equations. \hfill$\diamond$
\end{example}

The reduction in these examples was possible when for some $k<a_s$ there was a number \defcolor{$m$} such that 
the consecutive values $w(k){+}1,\dotsc,w(k){+}m$ for the permutation $w$ occured at positions
$i\leq\lceil k\rceil_\adot$.
If $i_1,\dotsc,i_m\leq\lceil k\rceil_\adot$ are the positions such that $w(i_j)=w(k)+j$, then the condition
of Lemma~\ref{L:square} that $g_k\in F_{w(k)}$ may be replaced by $g_k\in F_{w(k)+m}=F_{w(i_m)}$, for
there is some linear combination of the vectors $g_k,g_{i_1},\dotsc,g_{i_m}$ that lies in $F_{w(k)}$.
Likewise, the variables $\alpha_{k,i_1},\dotsc,\alpha_{k,i_m}$ are not needed.

We formalize this.
Consider vectors $e_1(x),\dotsc,e_{a_s}(x)$ coming from Steifel coordinates $\calX$ for some subset $X$ of
$\Fl(\adot;n)$. 
For each $k=1,\dotsc,a_s$, let $\beta_k$ be the set of indeterminates
 \begin{equation}\label{eq:beta_k}
   \defcolor{\beta_k}\ :=\ 
   \{ \beta_{k,i}\mid i\leq\lceil k\rceil_\adot\ 
       \mbox{and  $\exists\, j>\lceil k\rceil_\adot$ with}\ w(k)<w(j)<w(i)\}\,,
 \end{equation}
 and set
 \begin{equation}\label{eq:new_g_k}
   g_k\ =\ \defcolor{g_k(x,\beta)}\ :=\ 
      e_k(x)\ +\ \sum_i \beta_{k,i} e_i(x)\ .
 \end{equation}
 Write $\beta=\defcolor{\beta(w)}=\cup_k \beta_k$ for the set of all these indeterminates and
 \defcolor{$|\beta(w)|$} for the number of indeterminates in $\beta$.

 For a complete flag $\Fdot$, the \demph{reduced lifted formulation for membership in $X_w\Fdot$} in the
 Steifel coordinates $\calX$ uses the additional variables $\beta(w)$ to form the
 expressions~\eqref{eq:new_g_k}, and has the equations given by the membership requirements
\[
    g_k(x,\beta)\ \in\ F_{w(k)+m(k)}\,,\ \mbox{ for } k=1,\dotsc,a_s\,,
\]
 where $m(k)$ is the largest number $m$ such that the consecutive values 
 $w(k){+}1,\dotsc,w(k){+}m$ for the permutation $w$ occur at positions $i\leq\lceil k\rceil_\adot$.

 The results in Subsection~\ref{SS:lifted} hold {\it mutatis mutandis} for this reduced lifted formulation of
 Schubert varieties and Schubert problems and are omitted.

%
\section{Comparison with the primal-dual square formulation}\label{S:compare}

We compare the efficiency of this lifted formulation to the
primal-dual formulation of~\cite{HaHS}.
Both involve added variables and bilinear equations in local Steifel coordinates.
We first compare these formulations to the determinatal formulation of a particular Schubert variety.
Next, we determine which of the two uses fewer added variables for each Schubert variety on a flag manifold
in $\C^9$, and then compare their computational efficiency for solving three Schubert problems, 
including two from~\cite{HaHS}.
We almost always observe a gain in efficiency for the lifted formulation over the primal-dual
formulation.

We may take advantage of whichever formulation is most efficient for a given Schubert variety,
for they are compatible.
That is, one may construct a hybrid system of equations for the intersection~\eqref{Eq:SchubertProblem} using a 
lifted formulation to determine membership in some of the Schubert varieties and a primal-dual
formulation to determine membership in the others. 
Whenever $|w|=1$, the determinantal formulation for membership in the hypersurface Schubert variety $X_w\Fdot$
is a single determinant in Steifel coordinates, so there is no need to use an alternative
formulation to obtain a square system.
In what follows, we will always use the determinantal formulation when $|w|=1$.

\subsection{Comparison of three formulations}\label{S:comparison}
We compare the three formulations, determinantal, primal-dual, and lifted, for membership in the Schubert
variety $X_{3478\,1256}\Fdot$ in $\Gr(4,8)$.
Let $\calX$ be Steifel coordinates for $\Gr(4,8)$, which is a set of $8\times 4$ matrices of rank 4 and
$\Fdot$ be a flag in $\C^8$.
The Schubert variety $X_{3478\,1256}\Fdot$ consists of those $4$-planes $H$ that meet the fixed $4$-plane 
$F_4$ in a subspace of dimension 2.

If $F_4$ is represented as the column space of a $8\times 4$ matrix, then a $4$-plane $H$ from $\calX$ lies
in $X_{3478\,1256}\Fdot$ if and only if
\[
   \mbox{rank}\, \bigl( H\ F_4 \bigr)\ \leq\ 6\,.
\]
{\it A priori}, each of the $7\times 7$ minors of this matrix must vanish for a total of 64 quartic
equations in the entries of $H\in\calX$.
In~\cite[\S~1.3]{HaHS} the Pl\"ucker embedding of the Grassmannian is used to give a smaller set of
equations which are linear combinations of the maximal $4\times 4$ minors of $\calX$.
The dimension of the linear span of such equations is the cardinality of the set 
$\{p\in\binom{[8]}{4}\mid p\not\leq 3478\}$ of increasing sequences $p$ of
length $4$ from $[8]=\{1,\dotsc,8\}$ where one of the inequalities $p_1\leq 3, p_2\leq 4, p_3\leq 7$, or 
$p_4\leq 8$ does not hold.
There are seventeen such sequences
\begin{center}
  5678\,,\ 
  4678\,,\ 
  3678\,,\ 
  4578\,,\ 
  2678\,,\ 
  3578\,,\ 
  4568\,,\ 
  1678\,,\ 
  2578\,,\\
  3568\,,\ 
  4567\,,\ 
  1578\,,\ 
  2568\,,\ 
  3567\,,\ 
  1568\,,\ 
  2567\,,\ 
  1567\,,
\end{center}
so that in Steifel coordinates,  $X_{3478\,1256}\Fdot$ is defined by 17 equations.

The primal-dual formulation uses a variant of the classical reduction to the diagonal.
Consider the map $\perp$ on $G(4,8)$ which sends a linear subspace $H$ to its annihilator, $H^\perp$.
This is an isomorphism in which $\perp(X_w\Fdot)=X_{w^\perp}\Fpdot$, where $\Fpdot$ is the flag of linear
forms annihilating the linear subspaces in $\Fdot$ and $w^\perp=w_0ww_0$, where $w_0(i)=n{+}1{-}i$.

To understand this in Steifel coordinates, pick a basis corresponding to the rows of a matrix whose dual
basis corresponds to the columns.
The dual Schubert variety $X_{w^\perp}\Fpdot$ has Steifel coordinates $\calX_{w^\perp}$, where we send
$K\in \calX_{w^\perp}$ to the row span of $K^T\Phi^{-1}$, where $\Phi$ is the matrix whose first $i$
columns span $F_i$.
In this formulation, the intersection of $X_{3478\,1256}\Fdot$ with the set parametrized by $\calX$ 
is the intersection of the graph of $\perp$ with the product
$X_{3478\,1256^\perp}\Fpdot\times\calX$.
Since $3478\,1256^\perp=3478\,1256$, the primal-dual formulation uses the coordinates
$\calX_{3478\,1256}\times\calX$ with the equations
\[
    K^T \Phi^{-1} H\ =\ 0_{4\times 4}\,,
\]
which state that the four-plane $K^T \Phi^{-1}$ annihilates $H$.
This involves $12=\dim \calX_{3478\,1256}$ new coordinates and 16 bilinear equations, which are the
entries of the matrix $K^T \Phi^{-1} H$.

Finally, the lifted formulation uses the local coordinates
\[
   \calY\ =\ \left(\begin{array}{cccc}1&0&y_{1,1}&y_{1,2}\\
                                      0&1&y_{2,1}&y_{2,2}\end{array}\right)^T
\]
from $\Gr(2,4)$:
For $Y\in \calY$ and $H\in\calX$, the $8\times 2$ matrix $HY$ is a two-plane in $H$.

If $\phi_1,\dotsc,\phi_4$ are the equations the define $F_4$, then the
lifted formulation for  the intersection of $X_{3478\,1256}\Fdot$ with the set parametrized by $\calX$
uses the coordinates $\calX\times\calY$ and the equations
\[
  \phi_i(HY)\ =\ 0\ \qquad\mbox{for } i=1,\dotsc,4\,.
\]
This involves $4=\dim\calY$ new coordinates and 8 bilinear equations (linear in the the entries of $YH$)
as $\phi_i(HY)= 0$ gives two equations, one for each column in $HY$.

\subsection{Added variables for Schubert varieties on flag manifolds in $\C^9$.}

The square primal-dual formulation of a Schubert variety on the flag
manifold~\cite{HaHS} uses that every flag $\Edot$ in $\C^n$ has an annihilating dual flag \defcolor{$\Edot^\perp$}
in the dual space to $\C^n$.
If $\Edot$ has type $\adot$, then $\Edot^\perp$ has type $\defcolor{\apdot}:=\{n{-}a_j\mid a_j\in\adot\}$.
This duality gives an isomorphism $\perp\colon \Fl(\adot;n)\to\Fl(\apdot;n)$ with
${\perp}(X_w\Fdot)=X_{w^\perp}\Fdot^\perp$ (we refer to Section 4 of~\cite{HaHS} where $w^\perp$ is defined).
A variant of the classical reduction to the diagonal allows us to formulate membership of a flag $\Edot$ in
$X_w\Fdot$ by parametrizing $X_{w^\perp}\Fdot^\perp$, using $\ell(w)$ new variables.

As explained in~\cite[Rem.~4.10]{HaHS}, sometimes membership of a flag $\Edot$ of type $\adot$ in a Schubert
variety $X_w\Fdot$ is equivalent to the membership of a projection $\pi(\Edot)$ in the projected Schubert
variety $\pi(X_w\Fdot)=X_v\Fdot$, where $\pi\colon \Fl(\adot;n)\to\Fl(\bdot;n)$ is the natural projection and
$\bdot\subset\adot$. 
When this occurs, the primal-dual formulation uses fewer, $\ell(v)$, new variables.
This is the \demph{reduced primal-dual formulation}.

For every Schubert variety $X_w\Fdot$ on a flag manifold $\Fl(\adot;9)$ with 
$1<|w|<\frac{1}{2}\dim(\adot)$ we compared the numbers of new variables needed in the two formulations.
The restriction $1<|w|$ is because the determinantal formulation when $|w|=1$ is already a
complete intersection. 
The restriction $|w|<\frac{1}{2}\dim(\adot)$ is because, as in Remark~\ref{R:Square_Formulation}, if
$|w|\geq\frac{1}{2}\dim(\adot)$, then we would work in local Steifel coordinates $\calX_w$ for the Schubert
variety $X_w\Fdot$ in any Schubert problem involving $w$ (and any Schubert problem has at most one
permutation satisfying this inequality).

There are $3,395,742$ such Schubert varieties in the 256 flag manifolds $\Fl(\adot;9)$.
We compared the reduced lifted formulation of Subsection~\ref{SS:improved} with the reduced primal-dual
formulation for all these Schubert varieties.
In $141,256$ ($4.160\%$) the primal-dual formulation used fewer new variables, 
in $3,161,233$ ($93.094\%$) the lifted formulation used fewer new variables, and in 
$93,253$ ($2.746\%$) the two were tied.

This overstates the efficiency of the primal-dual formulation.
For example, in only 7 of 1725 relevant Schubert varieties in $\Fl(2,3,5;9)$ did the reduced primal-dual
formulation involve fewer additional variables.
In contrast, on the isomorphic dual flag variety $\Fl(4,6,7;9)$ in 124 out of 1725 relevant Schubert
varieties the reduced primal-dual formulation involved fewer variables.

To gain an idea of how this might be exploited, we determined which of each pair
of dual flag manifolds $\Fl(\adot;9)$ and $\Fl(\apdot;9)$ was more favorable for 
the reduced lifted formulation of its Schubert varieties.
We redid our computation comparing the two formulations, but restricted it to those flag manifolds
$\Fl(\adot;9)$ where $\Fl(\adot;9)$ was more favorable for the reduced lifted formulation
than $\Fl(\apdot;9)$.
This is a fair restriction, for the number of additional variables in the reduced primal-dual
formulation is the same for a Schubert variety and for its dual, but may be different for the 
reduced lifted formulations.

Redoing the computation, there were $1,877,752$ Schubert varieties, as we only considered one
of each dual pair of flag manifolds.
In $53,698$ ($2.860\%$) the primal-dual formulation used fewer new variables, 
in $1,784,646$ ($95.04\%$) the lifted formulation used fewer new variables, and in 
$39,408$ ($2.099\%$) the two were tied.

The reduced lifted formulation is always better for the Grassmannian $\Gr(k,n)$ than for its dual
$\Gr(n{-}k,n)$ when $2k\leq n$.

\begin{lemma}
 If $2k\leq n$, then the reduced lifted fromulation always uses fewer variables than the primal-dual
 formulation for Schubert varieties $X_w\Fdot$ in the Grassmannian $\Gr(k,n)$ with
 $|w|<\frac{1}{2}(k(n{-}k)$. 
\end{lemma}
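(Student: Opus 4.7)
The plan is to compute the number of additional variables used by each formulation explicitly for the Grassmannian case and then compare, so that the lemma reduces to a short arithmetic inequality.

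First I would specialize the count \eqref{Eq:number_variables} to $\Gr(k,n)$. Since $\adot = (k)$, we have $\lceil j \rceil_{\adot} = k$ for every $j \leq k$, and every $w \in W^{(k)}$ has its only possible descent at position $k$, hence $w(1) < w(2) < \cdots < w(k)$. For each $j \leq k$ this gives $\{i \leq k : w(i) > w(j)\} = \{j{+}1,\ldots,k\}$, so
\[
|\alpha(w)| \;=\; \sum_{j=1}^{k}(k-j) \;=\; \binom{k}{2},
\]
independently of $w$. Because every $\beta$-variable requires (in particular) $w(k) < w(i)$, one has $\beta(w) \subseteq \alpha(w)$, so the reduced lifted formulation also uses at most $\binom{k}{2}$ new variables.

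Next I would identify the primal-dual count. A Grassmannian admits no nontrivial projection $\pi \colon \Gr(k,n) \to \Fl(\bdot;n)$ of the sort considered in Remark~4.10 of~\cite{HaHS}, since the only proper subsequence of $(k)$ is empty. Consequently the reduced primal-dual formulation coincides with the ordinary primal-dual formulation, using $\ell(w) = k(n-k) - |w|$ new variables.

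It then remains to check $\binom{k}{2} < k(n-k) - |w|$ under the hypotheses. From $2k \leq n$ we obtain $k - 1 < n - k$ and hence $\binom{k}{2} < \tfrac{1}{2}k(n-k)$; adding this to the hypothesis $|w| < \tfrac{1}{2}k(n-k)$ gives
\[
|\alpha(w)| + |w| \;<\; \tfrac{1}{2}k(n-k) + \tfrac{1}{2}k(n-k) \;=\; k(n-k),
\]
which rearranges to the required inequality $|\alpha(w)| < \ell(w)$. The main (minor) obstacle I anticipate is confirming that the reduced primal-dual really does use $\ell(w)$ variables here---that is, that duality $\perp$ and projection offer no further reduction---but since $\ell(w^\perp) = \ell(w)$ and no nontrivial forgetful projections exist on a Grassmannian, this check is immediate.
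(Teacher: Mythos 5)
Your proof is correct and takes essentially the same route as the paper: specialize to the Grassmannian where $|\alpha(w)|=\binom{k}{2}$, note $|\beta(w)|\leq|\alpha(w)|$, and compare with $\ell(w)=k(n-k)-|w|$ via the two inequalities $\binom{k}{2}<\tfrac{1}{2}k(n-k)$ (from $2k\leq n$) and $|w|<\tfrac{1}{2}k(n-k)$. The paper simply cites the $\binom{k}{2}$ count without re-deriving it and states the inequality chain more tersely; your added observations about $\beta(w)\subseteq\alpha(w)$ and the absence of nontrivial projections on a Grassmannian are correct but not strictly needed for the stated comparison.
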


\begin{proof}
 The original lifted formulation for Schubert varieties in the Grassmannian $\Gr(k,n)$ used
 $\binom{k}{2}$ additional variables, while the primal dual formulation for $X_w\Fdot$ uses 
 $\ell(w)=k(n{-}k)-|w|$ variables.
 The lemma follows as $\ell(w)\geq \frac{1}{2}k(n{-}k)>\frac{1}{2}k(k{-}1)$.
\end{proof}

\begin{remark}
The Grassmannian $\Gr(k,n)$ has a more efficient primal-dual formulation that uses the 
Steifel coordinates of Remark~\ref{R:ReductionXw1nXw2} for the intersection of two Schubert varieties.
This involves $k(n{-}k)-|w_1|-|w_2|$ new variables, while the lifted formulation uses 
$k(k{-}1)$ new variables to formulate membership in two Schubert varieties.
The lifted formulation is more efficient when 
\[
   k(n{-}k)-|w_1|-|w_2| \ >\ k(k-1)\,.
\]
Since we may assume that $|w_1|+|w_2|<\frac{1}{2}k(n{-}k)$, the lifted formulation 
is always more erficient when $k<(n{+}2)/3$ for then
\[
  k(n{-}k)-|w_1|-|w_2| \ \geq\ \frac{1}{2}k(n{-}k)\ >\ \frac{1}{2}k(2k-2)\ =\ k(k{-}1)\,.
  \eqno{\diamond}
\]
\end{remark}

\subsection{Computational time and resources}

We computed instances of three Schubert problems using the (reduced) lifted formulation.
Two were computed using a primal-dual formulation in~\cite{HaHS}, and the third is a
problem with many more solutions. 
In all, the lifted formulation used fewer variables and less computational resources.

\begin{example}\label{Ex:Grassmannian437}
 Consider the Schubert problem in $\Gr(3;9)$ given by the permutations
 \[
    w_1,\dotsc,w_4\ =\ 489\,123567 \qquad \mbox{and} \qquad w_5,\dotsc,w_{10}\ =\ 689\,123457\,.
  \]
 This has $437$ solutions and asks for the $3$-planes in $\C^9$ which nontrivially meet four
 given $4$-planes and six given $6$-planes. 
 The classical formulation of the intersection~\eqref{Eq:SchubertProblem} in Stiefel coordinates for
 $X_{w_1}\Fdot^1\cap X_{w_2}\Fdot^2$ is a system of $12$ variables, $20$ independent linear
 combinations of cubic minors and six cubic determinants. 

 The square primal-dual formulation with similar coordinates involves $24$ variables, $18$ bilinear equations,
 and six cubic determinants. 
 The determinants correspond to the conditions $w_5,\dotsc,w_{10}$ as $|689\,123457|=1$.
 In~\cite{HaHS} we used Bertini~\cite{bertini} to solve an instance of this Schubert problem given by random
 real flags. 
 This computation consumed $20.37$ gigaHertz-hours to calculate $437$ approximate solutions.
 We then used rational arithmetic in alphaCertified~\cite{alphaC} to certify the solutions, which used
 $2.00$ gigaHz-hours. 

 We formulate this Schubert problem using the lifted formulation.
 We use Stiefel coordinates for $X_{w_1}\Fdot^1\cap X_{w_2}\Fdot^2$ which use
 $\dim(\Gr(3;9))-|w_1|-|w_2|=18-3-3=12$ variables. 
 The reduced lifted formulations of $X_{w_i}\Fdot^i$ for $i=3,4$ require a total of
 $|\beta(w_3)|+|\beta(w_4)|=2 + 2=4$ new variables and $2\cdot 5=10$ bilinear equations.
 As in the primal-dual formulation, we formulate membership in the six remaining hypersurface Schubert
 varieties using six cubic determinants. 
 The result is a system of $12+4=16$ variables and $10+6=16$ equations.
 To compare with the primal-dual formulation, we solved a random instance using regeneration with
 the same variables, hardware, software, and software version. 
 The lifted formulation of $16$ variables and equations was a significant improvement, using only $4.75$
 gigaHertz-hours to calculate $437$ approximate solutions.
 The output suggests $107$ of the solutions are real, while the rest are non-real.
 We used seven processors in parallel, but many more could be efficiently used as the regeneration tracked up to
 $2,265$ paths in one step.
 
 Certification time was also significantly improved by using this formulation.
 A $33.54$ gigaHertz-minute computation in alphaCertified~\cite{alphaC} using rational arithmetic
 verified that the $437$ points in the output are indeed approximate solutions and that the corresponding
 exact solutions are distinct. 
 This computation also proved the reality for $107$ of the exact solutions. 
 \hfill$\diamond$
\end{example}

We compare the primal-dual and lifted formulations in a more general flag manifold.

\begin{example}\label{Ex:flag128}
 Consider the Schubert problem with $128$ solutions in $\Fl(2,4,5;8)$ given by
 \begin{eqnarray*}
  w_1         &=& 48\,57\,3\,126\,,\\
  w_2,w_3     &=& 78\,45\,3\,126\,,\\
  w_4,w_5     &=& 68\,57\,4\,123\,,\\
  w_6,w_7,w_8 &=& 78\,46\,5\,123\,,\ \mbox{ and}\\
  w_9         &=& 47\,38\,5\,126\,.
 \end{eqnarray*}
 Applying all improvements given in~\cite{HaHS} produced a primal-dual formulation with $41$ variables, $36$
 bilinear equations, two quadratic determinantal equations corresponding to the hypersurface conditions
 $w_4,w_5$, and three quartic determinantal equations from $w_6,w_7,w_8$. 
 This square system corresponding to a random choice of nine real flags took $2.95$
 gigaHertz-days of processing power to solve and $1.78$ gigaHertz-hours to certify.
 
 We analyze this Schubert problem using a reduced lifted formulation in the Stiefel coordinates
 $\calX_{w_9}$ consisting of $\ell(w_9)=16$ variables. 
 The reduced lifted formulations of $X_{w_i}\Fdot^i$ for $i=1,2,3$ add
 $|\beta(w_1)|+|\beta(w_2)|+|\beta(w_3)|=5+6+6=17$ new variables and $10+9+9=28$ bilinear equations. 
 As in the primal-dual formulation, we formulate $X_{w_i}\Fdot^i$ for $i=4,\dotsc,8$ using two quadrtic and
 three quartic determinants.
 The reduced lifting uses $16+17=33$ variables and $28+2+3=33$ equations.
 As in Example~\ref{Ex:Grassmannian437}, we compare this with the primal-dual formulation using the
 tools which were utilized in~\cite{HaHS}. 
 To facilitate certification, we computed approximate solutions with two extra digits of precision
 compared to our computation in~\cite{HaHS}. 

 With these tighter parameters, we still observed an improvement in efficiency when solving a system with
 the new formulation of $33$ variables and equations. 
 This used $1.13$ gigaHertz-days of computing; less than half the power consumed by the similar instance
 using the primal-dual formulation. 
 The output was $128$ approximate solutions, of which $42$ appeared to be real.
 Again, we used alphaCertified with rational arithmetic to certify the approximate solutions, verify they
 correspond to distinct solutions, and prove that $42$ exact solutions are real. 
 Certification required $1.68$ gigaHertz-hours of processor power, marginally less than certification for
 the similar instance we solved via a primal-dual formulation. 
 
 The initial computation used six processors in parallel, but many more could be efficiently used as the
 regeneration tracked up to 
 $708$ paths in one step.
 Certification could have efficiently used $128$ processors.
 \hfill$\diamond$
\end{example}

We formulated and solved a higher-degree problem in a Grassmannian.

\begin{example}\label{Ex:Grassmannian28490}
\newcommand{\ts}{\hspace{0.5pt}}
\newcommand{\tns}{\hspace{-0.5pt}}
 Consider the Schubert problem with $28,490$ solutions in $\Gr(3;10)$ given by
\[
  w_1,w_2,w_3 = 5\ts9\ts1\tns0\ts\,1\ts2\ts3\ts4\ts6\ts7\ts8\qquad\mbox{and}\qquad
  w_4,\dotsc,w_{15} = 7\ts9\ts1\tns0\ts\,1\ts2\ts3\ts4\ts5\ts6\ts8\,.
\]
 This asks for the $3$-planes in $\C^{10}$ that nontrivially meet three given $5$-planes and twelve given
 $7$-planes. 
 In the determinantal formulation, we parametrize $X_{w_1}\Fdot^1\cap X_{w_2}\Fdot^2$ using
 $\dim(\Gr(3;10))-|w_1|-|w_2|=21-3-3=15$ variables, and membership in $X_{w_3}\Fdot^3$ is given by the
 vanishing of ten independent linear combinations of cubic minors.
 Including the cubic determinants for $X_{w_i}\Fdot^i$ for $i=4,\dotsc,15$ uses $15$ variables and $22$ cubic
 equations.

 The primal-dual formulation uses $33$ variables, $21$ bilinear equations, and twelve cubic determinants.
 The lifted formulation begins with Steifel coordinates involving $15$ variables that parametrize
 $X_{w_1}\Fdot^1\cap X_{w_2}\Fdot^2$. 
 The reduced lifted formulation for $X_{w_3}\Fdot^3$ uses five bilinear equations and adds $|\beta(w_3)|=2$
 variables for a total of $17$ variables. 
 The twelve hypersurface conditions $w_4,\dotsc,w_{15}$ are each given by a single cubic determinant for a
 total of $5+12=17$ equations. 
 The only difference is for $X_{w_3}\Fdot^3$ which use $18$ variables and $21$ bilinear equations with the
 primal-dual formulation but only two variables and five bilinear equations for the lifted formulation.

 We chose $15$ random real flags and solved the corresponding instance of the Schubert problem using
 $1.71$ gigaHertz-months of processing power to apply regeneration in Bertini v.\ 1.4 and $4.00$
 gigaHertz-hours of power to apply four Newton iterations to the output using alphaCertified. 
 This produced $28,490$ approximate solutions, and $1,436$ appeared to be real.
 The main calculation in Bertini used $8$ processors in parallel, but many more could be used efficiently
 as the regeneration tracked up 
 to $148,161$ paths in one step.

 Due to the size of the output, we soft certified our results using floating-point arithmetic in alphaCertified with $192$-bit precision.
 This heuristically verified that the $28,490$ points are approximate solutions, that they correspond to distinct solutions, and that $1,436$ of them correspond to real solutions.
 This computation consumed $47.15$ gigaHertz-minutes of processing power.
 A rigorous computation using rational arithmetic, but only for the $1,436$ apparently real solutions,
 used $1.80$ gigaHertz-days and proved that $1,436$ points in the output are approximate solutions
 corresponding to distinct real solutions. 
 \hfill$\diamond$
\end{example}

We give additional details for the computations and comparisons in
Examples~\ref{Ex:Grassmannian437},~\ref{Ex:flag128}, and~\ref{Ex:Grassmannian28490} at the following site. 

\url{http://www.unk.edu/academics/math/_files/square.html}

\providecommand{\bysame}{\leavevmode\hbox to3em{\hrulefill}\thinspace}
\providecommand{\MR}{\relax\ifhmode\unskip\space\fi MR }
\providecommand{\MRhref}[2]{%
  \href{http://www.ams.org/mathscinet-getitem?mr=#1}{#2}
}
\providecommand{\href}[2]{#2}

\end{document}